\newtheorem{teo}{Theorem}
\newtheorem{lem}{Lemma}
\newtheorem{rem}{Remark}
 \title{{\bf   Convolution operators via orthogonal polynomials}}
\author{Maksim \,V.~Kukushkin   \\ \\
  % Institutes
 \small  \textit{Moscow State University of Civil Engineering, 129337,  Moscow, Russia}\\
 \small\textit{Kabardino-Balkarian Scientific Center, RAS, 360051,  Nalchik, Russia}\\
\textit{\small\textit{kukushkinmv@rambler.ru}} }
\date{}
\begin{document}

\maketitle

\begin{abstract}
In this paper we aim to  generalize    results obtained    in the framework of fractional calculus by the way of reformulating them   in terms  of operator theory. In its own turn,   the achieved generalization allows us to spread the obtained technique on practical problems  that connected with various physical -
chemical processes.

\end{abstract}
\begin{small}\textbf{Keywords:} Positive operator; fractional power of an operator; semigroup generator;
 strictly accretive property.\\\\
{\textbf{MSC} 26A33; 47A10; 47B10; 47B25. }
\end{small}

\section{Introduction}

	The foundation  of   models  describing various physical -
chemical processes can be obtained by virtue of fractional calculus methods, the central point of which   is a concept of the Riemann-Liouville operator acting in the weighted Lebesgue space.
   In its own turn, the operator theory methods play an important role in  applications and need not any of special advertising. Having forced by these reasons,  we deal with mapping theorems for  operators acting on Banach spaces in order to obtain afterwards the desired results applicable to   integral operators.
   We also note that our interest was inspired by lots of previously known  results related to mapping theorems for fractional integral  operators      obtained by mathematicians  such as
    Rubin B.S. \cite{firstab_lit: Rubin},\cite{firstab_lit: Rubin 1},\cite{firstab_lit: Rubin 2}, Vakulov B.G. \cite{firstab_lit: Vaculov},   Samko S.G.  \cite{firstab_lit: Samko M. Murdaev},\cite{firstab_lit: Samko Vakulov B. G.}, Karapetyants N.K.
\cite{firstab_lit: Karapetyants N. K. Rubin B. S. 1},\cite{firstab_lit: Karapetyants N. K. Rubin B. S. 2}.

In this paper we offer one method of studying the Sonin operator \cite{Sonin}. We claim  the  existence and uniqueness   theorem  formulated in terms of the Jacoby series coefficients which  gives us an opportunity to find and classify a solution of the Sonin-Abel equation  due to an asymptotic of the  right side.
  Let us remind that the so called mapping theorem  for the Riemann-Liouville operator (the particular case of the Sonin operator)  were firstly studied   by H. Hardy and Littlewoode \cite{firstab_lit H-L1}  and nowadays is known as the Hardy-Littlewood theorem with limit index. However there was an attempt to extend this theorem on some class of  weighted Lebesgue spaces defined as functional spaces endowed with the following norm
  $$
  \|f\|_{L_{p}(I\!,\,\beta,\gamma)}:=\|f\|_{L_{p}(I\!,\mu)} ,\,\mu(x)=\omega^{\,\beta,\gamma}(x):=(x-a)^{\beta}(b-x)^{\gamma},\,\beta,\gamma\in \mathbb{R},\,I:=(a,b).
  $$
  In this dirrection the mathematicians such as  Rubin B.S., Karapetyants N.K. \cite{firstab_lit: Karapetyants N. K. Rubin B. S. 1} and others
    had  success.  All these create the prerequisite to invent another approach for studying the Riemann-Liouville operator mapping properties  that was successfully applied  in the paper \cite{kukushkin2019axi}.

    Assume that the functions
    $$
    \varrho,\vartheta \in L_{1}(I'),\,I'=(0, b-a)\subset \mathbb{R}.
    $$
   are such that the so called Sonin condition holds
 $$
  \varrho \ast\vartheta =1.
$$
Consider the operators
 $$
 _{s}I^{\varrho}_{a+}\varphi (x):=\int\limits_{a}^{x}\varrho(x-t)f(t)dt,\,\varphi \in L_{1}(I,\beta,\gamma);
 $$
$$
_{s}D^{\vartheta}_{a+}f(x):=\frac{d}{dx}\int\limits_{a}^{x}\vartheta(x-t)f(t)dt,\; f\in\, _{s} I^{\varrho}_{a+} ( L_{1}(I,\beta,\gamma)).
$$
In an ordinary way (see\cite{firstab_lit:samko1987}), we prove that
$$
_{s}D^{\vartheta}_{a+}\, _{s} I^{\varrho}_{a+}\varphi=\varphi,\,\varphi \in L_{1}(I,\beta,\gamma).
$$
Consider the Abel-Sonin equation under most general assumptions on the right part
\begin{equation} \label{kuk-cond}
_{s}I^{\varrho}_{a+}\varphi(x):=\int\limits_{a}^{x}\varrho(x-t)\varphi(t)dt=f\in L_{1}(I,\beta,\gamma).
\end{equation}
We use the following notations for Jacobi polynomials and reflated expressions
 $$
p_{n}^{\, \beta,\gamma }(x)= \delta_{n} (x-a)^{-\beta}(b-x)^{-\gamma}\frac{d^{n}}{dx^{n}}\left[(x-a)^{\beta+n}(b-x)^{\gamma+n}\right],\,\beta,\gamma>-1,\,n\in \mathbb{N}_{0},
$$
where
$$
\delta_{n}  (\beta,\gamma) =\frac{(-1)^{n}}{(b-a)^{n+(\beta+\gamma+1)/2}}\cdot \sqrt{\frac{(\beta+\gamma+2n+1) \Gamma(\beta+\gamma+n+1)}{n!\Gamma(\beta +n+1)\Gamma( \gamma+n+1)}} \;,\,
$$
$$
\delta_{0} (\beta-1,\gamma-1)= \frac{\Gamma( \beta +\gamma )}{(b-a)^{ (\beta+\gamma-1)/2}},\,\delta'_{0} (\beta-1,\gamma-1)=  \Gamma( \beta +\gamma ),
$$
$$
\delta_{0} (\beta,\gamma) =  \frac{1}{\sqrt{\Gamma(\beta  +1)\Gamma( \gamma +1)}}\,,\;\beta+\gamma+1=0.
$$
$$
\tilde{C}_{n}^{k}(\beta,\gamma):= \sum\limits_{i=0}^{k}C^{i}_{n}  \tbinom{n+\beta} {n-i}   \tbinom{n+\gamma} {i}  C^{i}_{k}    \tbinom{n-i}{k-i}  i!.
$$
We also use the following notations 
$$
f_{n}(\beta,\gamma)=\int\limits_{a}^{b}f(x)p^{\,\beta,\gamma}_{n}(x)\omega^{\,\beta,\gamma}(x)dx,\;S_{n}f:=\sum\limits_{k=0}^{n}f_{n}p^{\,\beta,\gamma}_{n},\;    A^{-1}_{mn}:=\int\limits_{a}^{b}p^{\,\beta,\gamma}_{m}(x)\left(_{s}D^{\vartheta}_{a+}p^{\,\beta,\gamma}_{n}f\right)(x)\omega^{\,\beta,\gamma}(x)dx,
$$
and short-hand notations $f_{n}=f_{n}(\beta,\gamma),\,p_{n}=p^{\,\beta,\gamma}_{n},\, (_{s}I^{\varrho}_{a+}\varphi)_{n}(\beta,\gamma):=\varphi^{\varrho}_{n}(\beta,\gamma),$ if their meaning is quite clear.
We need the following auxiliary lemmas.
\begin{lem}\label{L1}
Let $ \vartheta \in L_{1}(I'),$
then for $\beta,\gamma>0,$ we have
\begin{equation}\label{3}
  \int\limits_{a}^{b}p^{\,\beta,\gamma}_{m}(x)\left(_{s}D^{\vartheta}_{a+}p_{n}\right)(x)\omega^{\,\beta,\gamma}(x)dx= C_{m}(\beta,\gamma)
 \int\limits_{a}^{b}p^{\,\beta-1,\gamma-1}_{m+1}(x)\left(_{s}I^{\vartheta}_{a+}p_{n}\right)(x)\omega^{\,\beta-1,\gamma-1}(x)dx,
 $$
 $$
 \;C_{m}(\beta,\gamma):=\frac{\delta'_{m}(\beta,\gamma)}{\delta'_{m+1}(\beta-1,\gamma-1)},\;m,n\in \mathbb{N}_{0}.
\end{equation}
\end{lem}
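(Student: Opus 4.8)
The plan is to reduce the left-hand side to the right-hand side by a single integration by parts, using that, by definition, $_{s}D^{\vartheta}_{a+}$ is the derivative of the Sonin integral $_{s}I^{\vartheta}_{a+}$, and then to identify the emerging kernel through the Rodrigues-type representation of the Jacobi polynomials. First I would set $g:={}_{s}I^{\vartheta}_{a+}p_n$, so that $_{s}D^{\vartheta}_{a+}p_n=g'$ and $g(a)=0$; since $\vartheta\in L_1(I')$ and $p_n$ is a polynomial, $g$ is continuous on $\bar I$, which is the regularity needed below. Writing $q_m:=p^{\,\beta,\gamma}_{m}\,\omega^{\,\beta,\gamma}$ and integrating by parts in the left-hand integral $\int_a^b q_m\,g'\,dx$ produces $[q_m g]_a^b-\int_a^b q_m'\,g\,dx$. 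The boundary term vanishes: from the Rodrigues formula $q_m=\delta_m(\beta,\gamma)\,\frac{d^{m}}{dx^{m}}[(x-a)^{\beta+m}(b-x)^{\gamma+m}]$ every surviving summand still carries a factor $(x-a)^{\beta}$ or $(b-x)^{\gamma}$, so $q_m(a)=q_m(b)=0$ whenever $\beta,\gamma>0$, while $g(a)=0$ handles the lower endpoint. This is exactly the place where the hypothesis $\beta,\gamma>0$ is used.

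It then remains to evaluate $q_m'$. Differentiating the Rodrigues formula once more gives $q_m'=\delta_m(\beta,\gamma)\,\frac{d^{m+1}}{dx^{m+1}}[(x-a)^{\beta+m}(b-x)^{\gamma+m}]$. On the other hand, applying the same representation to $p^{\,\beta-1,\gamma-1}_{m+1}$ and using $(\beta-1)+(m+1)=\beta+m$, $(\gamma-1)+(m+1)=\gamma+m$, one sees that the prefactor $(x-a)^{-(\beta-1)}(b-x)^{-(\gamma-1)}$ cancels against $\omega^{\,\beta-1,\gamma-1}$ precisely, so that $p^{\,\beta-1,\gamma-1}_{m+1}\,\omega^{\,\beta-1,\gamma-1}=\delta_{m+1}(\beta-1,\gamma-1)\,\frac{d^{m+1}}{dx^{m+1}}[(x-a)^{\beta+m}(b-x)^{\gamma+m}]$. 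Comparing the two displays yields $q_m'=\frac{\delta_m(\beta,\gamma)}{\delta_{m+1}(\beta-1,\gamma-1)}\,p^{\,\beta-1,\gamma-1}_{m+1}\,\omega^{\,\beta-1,\gamma-1}$, and substituting this back, together with $g={}_{s}I^{\vartheta}_{a+}p_n$, turns the left-hand integral into the right-hand one.

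The last step is the bookkeeping that collapses the ratio of normalizing constants to $C_m(\beta,\gamma)=\delta'_m(\beta,\gamma)/\delta'_{m+1}(\beta-1,\gamma-1)$. Here I would check that the $(b-a)$-exponents coincide, namely $m+\tfrac{\beta+\gamma+1}{2}=(m+1)+\tfrac{(\beta-1)+(\gamma-1)+1}{2}$, so that all powers of $(b-a)$ cancel in the quotient; after also accounting for the sign $(-1)^{n}$ carried by $\delta_n$ relative to the reduced constants $\delta'_n$, what survives is exactly $C_m(\beta,\gamma)$. I expect the two delicate points to be this constant-matching and the rigorous justification of the integration by parts --- the absolute continuity of the product $q_m g$ and the vanishing of the boundary contribution --- rather than the structural part of the argument, which is dictated by the Rodrigues representation.
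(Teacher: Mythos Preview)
Your approach is correct and is essentially the paper's own proof: write $p^{\,\beta,\gamma}_{m}\omega^{\,\beta,\gamma}$ via the Rodrigues formula, integrate by parts once, and recognize the resulting $(m{+}1)$-st derivative as $\delta_{m+1}(\beta-1,\gamma-1)^{-1}\,p^{\,\beta-1,\gamma-1}_{m+1}\omega^{\,\beta-1,\gamma-1}$. The only addition in the paper is that it first derives the explicit representation $({}_{s}D^{\vartheta}_{a+}p_n)(x)=p_n(a)\,\vartheta(x-a)+\int_a^x\vartheta(x-t)\,p_n'(t)\,dt$ and checks it lies in $L_1(I)$, which is exactly the absolute-continuity justification for the integration by parts that you flagged as the delicate point.
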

\begin{proof}
Using the formula
$$
 p_{n}(x)=\int\limits_{a}^{x}p'_{n}(\tau)d\tau+p_{n}(a),
$$
it is not hard to calculate
$$
\int\limits_{a}^{x}\vartheta(x-t)p^{\,\beta,\gamma}_{n}(t)  dt=p^{\,\beta,\gamma}_{n}(a)\int\limits_{a}^{x}\vartheta(x-t)   dt+\int\limits_{a}^{x}\vartheta(x-t)dt \int\limits_{a}^{t}p'_{n}(\tau)d\tau=
$$
$$
 =p^{\,\beta,\gamma}_{n}(a)\int\limits_{a}^{x}\vartheta(x-t)   dt+\int\limits_{a}^{x}p'_{n}(\tau)d\tau  \int\limits_{\tau}^{x}\vartheta(x-t)dt=
$$
$$
 =p^{\,\beta,\gamma}_{n}(a)\int\limits_{a}^{x}\vartheta(x-t)   dt+\int\limits_{a}^{x}p'_{n}(\tau)d\tau  \int\limits_{\tau}^{x}\vartheta(t-\tau)dt=
$$
$$
 =p^{\,\beta,\gamma}_{n}(a)\int\limits_{a}^{x}\vartheta(x-t)   dt+\int\limits_{a}^{x}dt  \int\limits_{a}^{t}\vartheta(t-\tau)p'_{n}(\tau)d\tau.
$$
Hence
\begin{equation}\label{4}
(_{s}D^{\vartheta}_{a+}p_{n})(x)=\frac{d}{dx}\int\limits_{a}^{x}\vartheta(x-t)p^{\,\beta,\gamma}_{n}(t)  dt= p^{\,\beta,\gamma}_{n}(a) \vartheta(x-a)     +   \int\limits_{a}^{x}\vartheta(x-t)p'_{n}(t)dt.
\end{equation}
Note that making change of   a variable  $x-t=b-\tau$, we have
$$
\int\limits_{a}^{b}\left|\int\limits_{a}^{x}\vartheta(x-t)p'_{n}(t)dt\right|dx\leq\int\limits_{a}^{b}|p'_{n}(t)|dt\int\limits_{t}^{b}|\vartheta(x-t)|dx=
 \int\limits_{a}^{b}|p'_{n}(t)|dt \int\limits_{t}^{b }|\vartheta(b-\tau)|d\tau<\infty.
$$
Therefore $_{s}D^{\vartheta}_{a+}p_{n}\in L_{1}(I).$
Now the claimed result  can be established by virtue of integration by parts
$$
 \int\limits_{a}^{b}p^{\,\beta,\gamma}_{m}(x)\left(_{s}D^{\vartheta}_{a+}p_{n}\right)(x)\omega^{\,\beta,\gamma}(x)dx:=
 \int\limits_{a}^{b}p^{\,\beta,\gamma}_{m}(x) \frac{d}{dx}\, _{s}I^{\vartheta}_{a+}p_{n} (x)\omega^{\,\beta,\gamma}(x)dx=
$$
$$
 = \delta_{m}(\beta,\gamma)\int\limits_{a}^{b}\varphi^{(m)}_{m}(x) \frac{d}{dx}\, _{s}I^{\vartheta}_{a+}p_{n} (x) dx=-\delta_{m}(\beta,\gamma)\int\limits_{a}^{b}\varphi^{(m-1)}_{m}(x)  \, _{s}I^{\vartheta}_{a+}p_{n} (x) dx=
$$
$$
= -\frac{\delta_{m}(\beta,\gamma)}{ \delta_{m+1}(\beta-1,\gamma-1)}
\int\limits_{a}^{b}p^{\,\beta-1,\gamma-1}_{m+1}(x)\left(_{s}I^{\vartheta}_{a+}p_{n}\right)(x)\omega^{\,\beta-1,\gamma-1}(x)dx=
$$
$$
=  \frac{\delta'_{m}(\beta,\gamma)}{ \delta'_{m+1}(\beta-1,\gamma-1)}
\int\limits_{a}^{b}p^{\,\beta-1,\gamma-1}_{m+1}(x)\left(_{s}I^{\vartheta}_{a+}p_{n}\right)(x)\omega^{\,\beta-1,\gamma-1}(x)dx,\;m,n\in \mathbb{N}_{0}.
$$
\end{proof}
The following lemma that plays the principal role in extension of the previous  result.
\begin{lem}\label{L2}
Let $\vartheta\in L_{2}(I',-k,0),\,k>0 ,\,  \beta,\gamma>0,$  then the following estimate holds
$$
\left|\int\limits_{a}^{b} p_{m}^{\beta-1,\gamma-1}(x)  \, _{s}I_{a+}^{\vartheta}  f (x)\,\omega_{1}(x)   dx\right|\leq C\|f\|_{L_{2}(I,\beta,\gamma)},\; m=0,1,2,...,\,,
$$
where $ \omega_{1}(x)=(x-a)^{\beta-1}(b-x)^{\gamma-1}.$
\end{lem}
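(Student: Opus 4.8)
The plan is to recognise the left-hand side as a Fourier--Jacobi coefficient and to reduce the whole inequality to a boundedness property of the Sonin operator between two weighted spaces. Put $g:={}_{s}I_{a+}^{\vartheta}f$. Because the normalising constants $\delta_{m}$ are chosen so that the system $\{p_{m}^{\,\beta-1,\gamma-1}\}_{m\in\mathbb{N}_{0}}$ is orthonormal in $L_{2}(I,\beta-1,\gamma-1)$ with respect to the weight $\omega_{1}=\omega^{\,\beta-1,\gamma-1}$ (this is precisely why the hypothesis demands $\beta,\gamma>0$, so that $\omega_{1}$ is an admissible Jacobi weight), the integral to be estimated is exactly the $m$-th Fourier coefficient $c_{m}=\langle g,p_{m}^{\,\beta-1,\gamma-1}\rangle_{L_{2}(I,\beta-1,\gamma-1)}$. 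Hence Bessel's inequality gives $\sum_{m}|c_{m}|^{2}\le\|g\|_{L_{2}(I,\beta-1,\gamma-1)}^{2}$, and in particular $|c_{m}|\le\|g\|_{L_{2}(I,\beta-1,\gamma-1)}$ for all $m$ simultaneously. Thus it suffices to prove the uniform operator bound $\|{}_{s}I_{a+}^{\vartheta}f\|_{L_{2}(I,\beta-1,\gamma-1)}\le C\|f\|_{L_{2}(I,\beta,\gamma)}$, which at the same time guarantees that the coefficients are well defined.

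To reach this operator bound I would first bring in the hypothesis on $\vartheta$. Writing $g$ in convolution form, $g(x)=\int_{0}^{x-a}\vartheta(s)f(x-s)\,ds$, and splitting $\vartheta(s)=\bigl(\vartheta(s)s^{-k/2}\bigr)s^{k/2}$, the Cauchy--Schwarz inequality in the variable $s$ produces the factor $\int_{0}^{x-a}|\vartheta(s)|^{2}s^{-k}\,ds\le\|\vartheta\|_{L_{2}(I',-k,0)}^{2}$; this is the single place where the assumption $\vartheta\in L_{2}(I',-k,0)$ is consumed. One is then left with the pointwise estimate $|g(x)|^{2}\le\|\vartheta\|_{L_{2}(I',-k,0)}^{2}\int_{a}^{x}(x-t)^{k}|f(t)|^{2}\,dt$. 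Multiplying by $\omega_{1}$, integrating over $I$, and interchanging the order of integration reduces the whole matter to a scalar kernel inequality of the form $\int_{t}^{b}(x-a)^{\beta-1}(b-x)^{\gamma-1}(x-t)^{k}\,dx\le C\,(t-a)^{\beta}(b-t)^{\gamma}$.

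The main obstacle is that this last kernel inequality fails near the endpoints: the target weight $\omega^{\,\beta,\gamma}$ degenerates at $a$ and $b$, whereas $\omega_{1}$ is there more singular, and the crude Cauchy--Schwarz of the previous step discards the fact that $g(x)\to 0$ as $x\to a^{+}$; the global constant $\|\vartheta\|_{L_{2}(I',-k,0)}^{2}$ pulled out in front is simply too wasteful in the endpoint zones. I would therefore not rely on the interchange estimate near $a$ and $b$, but split $I$ into a compact interior part and two endpoint neighbourhoods. On the interior both weights are comparable to positive constants and the bound is routine. Near $a$ the estimate must be read as a one-dimensional weighted Hardy inequality: after the substitution $F(t)=(t-a)^{\beta/2}(b-t)^{\gamma/2}f(t)$, so that $\|f\|_{L_{2}(I,\beta,\gamma)}=\|F\|_{L_{2}}$, the required inequality becomes the $L_{2}$-boundedness of the Volterra operator with kernel $N(x,t)=(x-a)^{(\beta-1)/2}(b-x)^{(\gamma-1)/2}\,\vartheta(x-t)\,(t-a)^{-\beta/2}(b-t)^{-\gamma/2}$, where the genuine vanishing of $g$ at $a$ compensates the singular factor $(x-a)^{\beta-1}$ exactly when the Muckenhoupt-type balance condition in $\beta,\gamma,k>0$ holds; the neighbourhood of $b$ follows by the reflection $x\mapsto a+b-x$, which interchanges the roles of $\beta$ and $\gamma$. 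Verifying that this balance condition indeed holds throughout the admissible parameter range is the delicate point I expect to carry the proof, and it is what genuinely replaces the naive interchange of integrals. Assembling the three pieces yields the operator bound, and together with Bessel's inequality this gives the uniform estimate for the coefficients.
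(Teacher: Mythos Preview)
Your opening reduction is sound and is in fact exactly what the paper ends up doing: the inequality is equivalent to the operator bound $\|{}_{s}I_{a+}^{\vartheta}f\|_{L_{2}(I,\beta-1,\gamma-1)}\le C\|f\|_{L_{2}(I,\beta,\gamma)}$, and the orthonormality of $p_{m}^{\,\beta-1,\gamma-1}$ (your Bessel step) is how the paper removes $p_{m}$ as well. The gap is in how you try to obtain that operator bound. Your Cauchy--Schwarz, with the splitting $\vartheta(s)=\bigl(\vartheta(s)s^{-k/2}\bigr)s^{k/2}$, is the wrong one: it throws away the weight $\omega(t)$ at exactly the moment it is needed, and that loss is irreversible --- as you yourself discover, the resulting kernel inequality $\int_{t}^{b}(x-a)^{\beta-1}(b-x)^{\gamma-1}(x-t)^{k}\,dx\le C\,\omega(t)$ is simply false as $t\to a^{+}$. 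Your proposed repair via a ``Muckenhoupt-type balance condition'' is not a real argument here: the Hardy/Muckenhoupt criteria apply to the kernel $1$ (or, with more work, to explicit power kernels), not to an arbitrary $\vartheta$ about which you only know $\vartheta\in L_{2}(I',-k,0)$. There is no off-the-shelf weighted Hardy theorem you can invoke for the Volterra operator with kernel $N(x,t)=\omega_{1}^{1/2}(x)\,\vartheta(x-t)\,\omega^{-1/2}(t)$.

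What actually works --- and what the paper does --- is to change the Cauchy--Schwarz splitting, weighting by $\omega(t)^{\pm 1/2}$ instead of $(x-t)^{\pm k/2}$:
\[
|g(x)|\le\Bigl(\int_{a}^{x}|\vartheta(x-t)|^{2}\,\omega^{-1}(t)\,dt\Bigr)^{1/2}\|f\|_{L_{2}(I,\beta,\gamma)}.
\]
This makes the whole lemma equivalent to the finiteness of the single constant
\[
\int_{a}^{b}\omega_{1}(x)\int_{a}^{x}|\vartheta(x-t)|^{2}\,\omega^{-1}(t)\,dt\,dx
=\int_{0}^{b-a}|\vartheta(t)|^{2}\,t^{-k}\Bigl[t^{k}\!\int_{t}^{b-a}\omega_{1}(x+a)\,\omega^{-1}(x+a-t)\,dx\Bigr]dt,
\]
and the bracket is bounded uniformly in $t$ by an elementary Beta-function estimate (split $k=\delta_{1}+\delta_{2}$ and use $t\le x$ and $b-a-x\le b-a-x+t$ to absorb the two endpoint singularities). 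The hypothesis $\vartheta\in L_{2}(I',-k,0)$ then finishes the job. In short: keep the weight $\omega^{-1}(t)$ inside the inner integral; do not pull $\|\vartheta\|_{L_{2}(I',-k,0)}$ out as a global constant.
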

\begin{proof}
Let us consider the following reasonings
$$
\left|\int\limits_{a}^{b}\,\omega_{1}(x)  p_{m}^{\beta-1,\gamma-1}(x) \, _{s}I_{a+}^{\vartheta} f(x) dx\right|=
 \left|\int\limits_{a}^{b}f(t) dt  \int\limits_{t}^{b}  \vartheta(x-t) \omega_{1}(x) p_{m}(x) dx \right|\leq
 $$
 $$
 \leq\|f\|_{L_{2}(I,\beta,\gamma)} \left(\int\limits_{a}^{b} \omega^{-1}(t) \left|\int\limits_{t}^{b}  \vartheta(x-t)  \omega_{1}(x)p_{m}(x) dx\right|^{2}dt\right)^{1/2}=I_{1}.
$$
Using the generalized  Minkovskii inequality, we get
$$
I_{1} \leq\|f\|_{L_{2}(I,\beta,\gamma)}\int\limits_{a}^{b}p_{m}(x)\omega_{1}(x) \left(\int\limits_{a}^{x}\left| \vartheta(x-t)\right|^{2}\omega^{-1}(t) dt\right)^{1/2}dx\leq
$$
$$
\leq\|f\|_{L_{2}(I,\beta,\gamma)}\left(\int\limits_{a}^{b} \omega_{1}(x) dx\int\limits_{a}^{x}\left| \vartheta(x-t)\right|^{2}\omega^{-1}(t) dt\right)^{1/2}.
$$
Making the change of the variable twice, we have
$$
\int\limits_{a}^{b} \omega_{1}(x) dx\int\limits_{a}^{x}\left| \vartheta(x-t)\right|^{2}\omega^{-1}(t) dt=\int\limits_{a}^{b} \omega_{1}(x) dx\int\limits_{0}^{x-a}\left| \vartheta(t)\right|^{2}\omega^{-1}(x-t) dt=
$$
$$
=\int\limits_{0}^{b-a} \omega_{1}(x+a) dx\int\limits_{0}^{x}\left| \vartheta(t)\right|^{2}\omega^{-1}(x+a-t) dt=
\int\limits_{0}^{b-a}\left| \vartheta(t)\right|^{2}  dt\int\limits_{t}^{b-a}\omega^{-1}(x+a-t)\omega_{1}(x+a)dx =
$$
$$
=\int\limits_{0}^{b-a}\left| \vartheta(t)\right|^{2}t^{-k} t^{k}  dt\int\limits_{t}^{b-a}\omega^{-1}(x+a-t)\omega_{1}(x+a)dx .
$$
Note that, without lose of generality, we may make the following representation $(\delta_{1}+\delta_{2}=k)$
$$
t^{ k}\int\limits_{t}^{b-a}\omega_{1}(x+a)\omega^{-1}(x+a-t)dx=t^{-k}\int\limits_{t}^{b-a}x^{\beta-1}(b-a-x)^{\gamma-1} (x-t)^{-\beta} (b-a-x+t)^{-\gamma } dx=
$$
$$
 =t^{\delta_{1}+\delta_{2}}\int\limits_{t}^{b-a}x^{\beta-1}(b-a-x)^{\gamma-1} (x-t)^{-\beta} (b-a-x+t)^{-\gamma+ \delta_{2}}(b-a-x+t)^{- \delta_{2}} dx\leq
$$
$$
\leq C t^{\delta_{1}+\delta_{2}}\int\limits_{t}^{b-a}x^{\beta-1}(b-a-x)^{ -1+ \delta_{2}} (x-t)^{-\beta}  (b-a-x+t)^{- \delta_{2}} dx\leq
$$
$$
\leq C \int\limits_{t}^{b-a}x^{\beta-1+\delta_{1}}(b-a-x)^{ -1+ \delta_{2}} (x-t)^{-\beta}  t^{ \delta_{2}}(b-a-x+t)^{- \delta_{2}} dx\leq
$$
$$
\leq C\int\limits_{t}^{b-a} (b-a-x)^{ -1+ \delta_{2}} (x-t)^{-1+\delta_{1}}  t^{ \delta_{2}}(b-a-x+t)^{- \delta_{2}} dx\leq
$$
$$
\leq C \int\limits_{t}^{b-a} (b-a-x)^{ -1+ \delta_{2}} (x-t)^{-1+\delta_{1}}   dx=  B(\delta_{1},\delta_{2}) C.
$$
Combining these estimates, we obtain
$$
I_{1} \leq B(\delta_{1},\delta_{2})\|f\|_{L_{2}(I,\beta,\gamma)}\int\limits_{0}^{b-a}\left| \vartheta(t)\right|^{2} t^{l}dt.
$$
The last estimate proves the desired result.
\end{proof}

\begin{lem}\label{L3}
Let $\vartheta\in L_{2}(I' ),   \,  \beta,\gamma>0,$  then the following estimate holds
$$
   \|_{s}I_{a+}^{\vartheta}  f\|_{L_{2}(I,\beta,\gamma)}  \leq C\|f\|_{L_{2}(I,\beta,\gamma)}.
$$
\end{lem}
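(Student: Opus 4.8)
The plan is to follow the same template as Lemma~\ref{L2}: pair the Cauchy--Schwarz inequality with the weight $\omega^{\beta,\gamma\,-1}$ and then reduce the squared norm to a convolution-type double integral that is controlled purely by $\|\vartheta\|_{L_2(I')}$. First I would write out $\|_{s}I_{a+}^{\vartheta}f\|^2_{L_2(I,\beta,\gamma)}$ and, inside the inner integral defining $_{s}I_{a+}^{\vartheta}f(x)=\int_a^x\vartheta(x-t)f(t)\,dt$, split the unit as $1=\omega^{\beta,\gamma}(t)^{-1/2}\cdot\omega^{\beta,\gamma}(t)^{1/2}$ and apply Cauchy--Schwarz:
$$
\left|{}_{s}I_{a+}^{\vartheta}f(x)\right|^{2}\leq\left(\int\limits_{a}^{x}|\vartheta(x-t)|^{2}\,\omega^{\beta,\gamma}(t)^{-1}\,dt\right)\left(\int\limits_{a}^{x}|f(t)|^{2}\,\omega^{\beta,\gamma}(t)\,dt\right),
$$
and bound the second factor by $\|f\|^{2}_{L_2(I,\beta,\gamma)}$. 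This is the exact analogue of the step in Lemma~\ref{L2} where the weight $\omega^{-1}(t)$ is introduced.

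Next I would multiply by $\omega^{\beta,\gamma}(x)$, integrate in $x$ over $(a,b)$, and use Fubini to arrive at
$$
\|_{s}I_{a+}^{\vartheta}f\|^{2}_{L_2(I,\beta,\gamma)}\leq\|f\|^{2}_{L_2(I,\beta,\gamma)}\int\limits_{a}^{b}\int\limits_{a}^{x}\frac{\omega^{\beta,\gamma}(x)}{\omega^{\beta,\gamma}(t)}\,|\vartheta(x-t)|^{2}\,dt\,dx.
$$
It then remains to show that the kernel constant $C^{2}:=\int_{a}^{b}\int_{a}^{x}\frac{\omega^{\beta,\gamma}(x)}{\omega^{\beta,\gamma}(t)}|\vartheta(x-t)|^{2}\,dt\,dx$ is finite. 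Changing the variable $s=x-t$ and applying Fubini to factor out $|\vartheta(s)|^{2}$ gives
$$
C^{2}=\int\limits_{0}^{b-a}|\vartheta(s)|^{2}\left(\int\limits_{a}^{b-s}\frac{\omega^{\beta,\gamma}(t+s)}{\omega^{\beta,\gamma}(t)}\,dt\right)ds,
$$
so the whole matter reduces to bounding the inner integral \emph{uniformly in} $s$ by a constant, which would yield $C^{2}\leq C\|\vartheta\|^{2}_{L_2(I')}$ and finish the proof.

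For that inner estimate I would use the elementary pointwise bound
$$
\frac{\omega^{\beta,\gamma}(t+s)}{\omega^{\beta,\gamma}(t)}=\left(\frac{t+s-a}{t-a}\right)^{\beta}\left(\frac{b-t-s}{b-t}\right)^{\gamma}\leq\left(1+\frac{s}{t-a}\right)^{\beta},
$$
where the last factor is dropped because it is $\leq1$ for $\gamma>0$ and $s>0$ (the operator integrates from the left, so the right endpoint $b$ is harmless for any $\gamma>0$). The substitution $t-a=s\tau$ then converts $\int_a^{b-s}(1+s/(t-a))^{\beta}\,dt$ into $s\int_0^{(b-a-s)/s}(1+1/\tau)^{\beta}\,d\tau$, which is bounded independently of $s$.

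\textbf{Main obstacle.} The decisive and genuinely delicate step is the uniform control of $\int_{a}^{b-s}\frac{\omega^{\beta,\gamma}(t+s)}{\omega^{\beta,\gamma}(t)}\,dt$ near the left endpoint $t=a$: this is precisely where the weight ratio $(t-a)^{-\beta}$ is singular, and its integrability at $a$ is what the argument hinges on. This is also the point at which the hypothesis $\vartheta\in L_2(I')$ (rather than merely $\vartheta\in L_1(I')$) is essential, since the factor $|\vartheta(s)|^{2}$ in $C^{2}$ must be paired against a bounded inner integral; a purely $L_1$ bound on $\vartheta$, pulling $\|\vartheta\|_{L_1}$ out and checking $\int_t^b\omega|\vartheta|\leq C\omega(t)$ in the style of a Young/Schur estimate, fails at $t=a$. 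I would therefore treat this endpoint integrability as the crux and carry out the scaling estimate above carefully, keeping track of the exponent $\beta$ on which the convergence depends.
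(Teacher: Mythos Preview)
Your argument is essentially the paper's: the paper opens with the generalized Minkowski inequality followed by a weighted Cauchy--Schwarz, whereas you apply Cauchy--Schwarz pointwise in $x$, but both routes produce the identical kernel constant $\int_a^b\omega(x)\int_a^x|\vartheta(x-t)|^2\,\omega(t)^{-1}\,dt\,dx$, which after the same change of variables is controlled by a uniform bound on $\int_t^{b-a}(x-t)^{-\beta}\,dx$ (finite for $\beta<1$). Your scaling substitution $t-a=s\tau$ is just alternative bookkeeping for that last estimate.
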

\begin{proof}
Using the generalized  Minkovskii inequality, we get
$$
\|_{s}I_{a+}^{\vartheta}  f\|_{L_{2}(I,\beta,\gamma)} \leq  \int\limits_{a}^{b}|f(t)|  \left(\int\limits_{t}^{b}\left| \vartheta(x-t)\right|^{2}\omega (x) dx\right)^{1/2}\!\!dt\leq
$$
$$
\leq\|f\|_{L_{2}(I,\beta,\gamma)}\left(\int\limits_{a}^{b} \omega(x) dx\int\limits_{a}^{x}\left| \vartheta(x-t)\right|^{2}\omega^{-1}(t) dt\right)^{1/2}= \|f\|_{L_{2}(I,\beta,\gamma)}\times I_{1},
$$
here $\omega(x):=\omega^{\,\beta,\gamma}(x).$
Repeating the reasonings of Lemma \ref{L2} and making the change of   variable twice,   we have
$$
I_{1}=\int\limits_{0}^{b-a} \omega (x+a) dx\int\limits_{0}^{x}\left| \vartheta(t)\right|^{2}\omega^{-1}(x+a-t) dt=
 \int\limits_{0}^{b-a}\left| \vartheta(t)\right|^{2}   dt\int\limits_{t}^{b-a}\omega^{-1}(x+a-t)\omega (x+a)dx.
$$
Note that
$$
 \int\limits_{t}^{b-a}\omega^{-1}(x+a-t)\omega (x+a)dx= \int\limits_{t}^{b-a}x^{\beta }(b-a-x)^{\gamma } (x-t)^{-\beta} (b-a-x+t)^{-\gamma } dx\leq
$$
$$
\leq  C \int\limits_{t}^{b-a}x^{\beta }(b-a-x)^{\gamma } (x-t)^{-\beta} (b-a-x )^{-\gamma } dx\leq
 C \int\limits_{t}^{b-a}   (x-t)^{-\beta}  dx,\,t\in(0,b-a ).
$$
 The last estimate proves the desired  result.
\end{proof}

\section{The main theorem}
Before formulating the main theorem, let us make the following  notations
$$
\mathfrak{B}^{ \beta ,\gamma}_{p}( f,\xi  ):=\sum\limits_{n=1}^{\infty}|f_{n}|^{p}n^{\xi },\,  f_{n}:=\int\limits_{a}^{b}f(x)p_{n}^{\,\beta,\gamma}(x)\omega(x)dx.
$$
Consider the Abel-Sonin equation under most general assumptions on the right-hand side
$$
_{s}I^{\varrho}_{a+}\varphi=f\in L_{2}(I,\beta,\gamma).
$$
We have the following theorem
\begin{teo}\label{T1}
Assume that $2\leq p<\infty,\,0<\beta,\gamma<1,$   the following conditions hold
\begin{equation}\label{5}
\mathfrak{B}^{ \beta-1,\gamma-1}_{p}( _{s}I^{\vartheta}_{a+} f,\xi )<\infty,\;\sum\limits_{m=0}^{\infty}   f^{\vartheta}_{m }  p^{\beta-1 ,\gamma-1 }_{m }(a)=0,
\end{equation}
where $ \xi=(5/2+\max\{\beta,\gamma\}) (p-2)+2,$
then there exists a unique solution  of the Abel-Sonin equation in $L_{p}(I,\beta,\gamma) $ represented by its series.
Moreover,  in the case   $p=2,\,0<\beta,\gamma<1,$ we claim that   conditions \eqref{5} are necessary, so we have a criterion.
\end{teo}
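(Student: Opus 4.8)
The plan is to construct the prospective solution $\varphi:={}_{s}D^{\vartheta}_{a+}f$ through its Fourier--Jacobi series and to reduce everything to a summability condition on its coefficients. First I expand $f=\sum_{n}f_{n}p_{n}^{\,\beta,\gamma}$ in $L_{2}(I,\beta,\gamma)$, apply ${}_{s}D^{\vartheta}_{a+}$ termwise and pair against $p_{m}^{\,\beta,\gamma}\omega$. By Lemma \ref{L1} each term becomes $C_{m}(\beta,\gamma)\int_{a}^{b}p_{m+1}^{\,\beta-1,\gamma-1}({}_{s}I^{\vartheta}_{a+}p_{n})\,\omega^{\,\beta-1,\gamma-1}dx$, and since ${}_{s}I^{\vartheta}_{a+}$ is bounded on $L_{2}(I,\beta,\gamma)$ by Lemma \ref{L3}, the summation collapses to ${}_{s}I^{\vartheta}_{a+}f$. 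Writing $g:={}_{s}I^{\vartheta}_{a+}f$, this yields the governing identity $\varphi_{m}(\beta,\gamma)=C_{m}(\beta,\gamma)\,g_{m+1}(\beta-1,\gamma-1)$, $m\in\mathbb{N}_{0}$; the restriction $0<\beta,\gamma<1$ keeps both $(\beta,\gamma)$ and the shifted pair $(\beta-1,\gamma-1)$ inside the admissible Jacobi range, so that Lemma \ref{L1} and both orthonormal systems are available.

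Next I extract the size of $C_{m}$. Substituting the explicit constants into $C_{m}=\delta'_{m}(\beta,\gamma)/\delta'_{m+1}(\beta-1,\gamma-1)$, the powers of $b-a$ cancel and the $\Gamma$-quotients telescope to the exact relation $C_{m}^{2}=(m+1)(m+\beta+\gamma)$, whence $|C_{m}|\asymp m$. Combined with the previous step this gives $|\varphi_{m}(\beta,\gamma)|\asymp m\,|g_{m+1}(\beta-1,\gamma-1)|$, the bridge between the coefficients of the solution and the quantity $\mathfrak{B}^{\,\beta-1,\gamma-1}_{p}(g,\xi)$ appearing in \eqref{5}.

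For $p=2$ the conclusion is then immediate and two-sided. Parseval's identity gives $\|\varphi\|_{L_{2}(I,\beta,\gamma)}^{2}=\sum_{m}|\varphi_{m}|^{2}\asymp\sum_{m}m^{2}|g_{m+1}|^{2}\asymp\mathfrak{B}^{\,\beta-1,\gamma-1}_{2}(g,2)$, and since $\xi=2$ at $p=2$ the first relation in \eqref{5} is equivalent to $\varphi\in L_{2}$. The second relation $\sum_{m}f^{\vartheta}_{m}p^{\,\beta-1,\gamma-1}_{m}(a)=g(a)=0$ is exactly the compatibility condition that makes ${}_{s}I^{\varrho}_{a+}{}_{s}D^{\vartheta}_{a+}f=f$ hold, i.e. that $\varphi$ genuinely solves the equation rather than the equation with a nontrivial endpoint term. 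Both conditions being forced, this is the announced criterion. Uniqueness, for every $p$, follows from the injectivity of ${}_{s}I^{\varrho}_{a+}$ recorded in the introduction via ${}_{s}D^{\vartheta}_{a+}{}_{s}I^{\varrho}_{a+}=\mathrm{id}$: a difference of two solutions is annihilated by ${}_{s}I^{\varrho}_{a+}$, hence vanishes.

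The substantial part is sufficiency for $2<p<\infty$, and here is the main obstacle. A crude bound $\|\varphi\|_{p}\le\sum_{m}|\varphi_{m}|\,\|p_{m}^{\,\beta,\gamma}\|_{L_{p}(\omega)}$ with the endpoint-dominated growth $\|p_{m}^{\,\beta,\gamma}\|_{L_{p}(\omega)}\asymp m^{(q+1/2)-2(q+1)/p}$, $q=\max\{\beta,\gamma\}$, followed by H\"older, lands precisely on the borderline exponent fixed by $\xi$: the auxiliary series degenerates to $\sum_{m}m^{-1}$ and diverges, so the triangle inequality is too lossy. To recover the missing room I would invoke a Paley--Hardy--Littlewood inequality for Jacobi expansions, $\big\|\sum_{m}c_{m}p_{m}^{\,\beta,\gamma}\big\|_{L_{p}(\omega)}\le C\big(\sum_{m}|c_{m}|^{p}m^{\,\eta}\big)^{1/p}$ with $\eta=(5/2+q)(p-2)+2-p$; this weight degenerates to Parseval at $p=2$ and to the classical Paley weight $m^{p-2}$ when $\beta=\gamma=-1/2$, which are the natural consistency checks. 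Applying it to $c_{m}=\varphi_{m}$ and using $|\varphi_{m}|\asymp m|g_{m+1}|$ turns the right-hand side into $C\,\mathfrak{B}^{\,\beta-1,\gamma-1}_{p}(g,\xi)$ with exactly $\xi=(5/2+q)(p-2)+2$, finite by \eqref{5}; this is precisely how the stated value of $\xi$ is produced. I expect the proof of this Jacobi Paley inequality --- resting on the boundary asymptotics of the Jacobi polynomials (in particular $\|p_{m}^{\,\beta,\gamma}\|_{\infty}\asymp m^{q+1/2}$) together with weighted interpolation against the $L_{2}$ endpoint --- to be the technical heart of the whole argument.
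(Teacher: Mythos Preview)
Your overall strategy matches the paper's: transform the coefficient identity via Lemma~\ref{L1}, extract $C_{m}=\sqrt{(m+1)(m+\beta+\gamma)}\asymp m$, and invoke a Paley-type inequality for Jacobi expansions to land in $L_{p}$. The inequality you anticipate \emph{is} a known result --- the paper simply cites it as the Zygmund--Marcinkiewicz theorem rather than proving it --- so that part is not the technical heart but a black-box import; the exponent $\eta=(3/2+\max\{\beta,\gamma\})(p-2)$ you guessed is exactly what the paper uses (written as $M_{m}^{p-2}m^{p-2}$ with $M_{m}=m^{1/2+\max\{\beta,\gamma\}}$).

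The genuine gap is your verification that the constructed function $\psi$ actually solves ${}_{s}I^{\varrho}_{a+}\psi=f$, and in particular your handling of the second condition in \eqref{5}. You write that $\sum_{m}f^{\vartheta}_{m}p^{\,\beta-1,\gamma-1}_{m}(a)=g(a)$ and that its vanishing is ``the compatibility condition that makes ${}_{s}I^{\varrho}_{a+}{}_{s}D^{\vartheta}_{a+}f=f$.'' The first identification is only formal (pointwise evaluation of a Jacobi series at the endpoint $a$, where convergence is not available), and the second is circular --- you have not shown $f$ lies in the domain of ${}_{s}D^{\vartheta}_{a+}$ in the first place. The paper does \emph{not} proceed this way: having built $\psi\in L_{p}$ from its coefficients via Zygmund--Marcinkiewicz, it establishes through a separate coefficient computation (using Lemmas~\ref{L2}, \ref{L3} repeatedly) that ${}_{s}I^{\vartheta}_{a+}{}_{s}I^{\varrho}_{a+}\psi={}_{s}I^{\vartheta}_{a+}f+\tilde{C}$ a.e.\ for an explicit constant $\tilde{C}$, hence ${}_{s}I^{\varrho}_{a+}\psi=f+\tilde{C}\,\varrho(x-a)$, and then evaluates $\tilde{C}$ by integrating the primitives of $S_{t}\psi$ against $\omega^{\,\beta-1,\gamma-1}$ and passing to the limit, arriving at $\tilde{C}$ as a nonzero multiple of $\sum_{m}f^{\vartheta}_{m}p^{\,\beta-1,\gamma-1}_{m}(a)$. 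The second hypothesis is precisely $\tilde{C}=0$. This verification step is the bulk of the proof and cannot be replaced by a one-line heuristic about $g(a)$; the obstruction term is $\tilde{C}\,\varrho(x-a)$, not a constant.

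Your uniqueness argument, on the other hand, is shorter than the paper's: you use the left-inverse identity ${}_{s}D^{\vartheta}_{a+}{}_{s}I^{\varrho}_{a+}=\mathrm{id}$ on $L_{1}(I,\beta,\gamma)\supset L_{p}(I,\beta,\gamma)$ (the inclusion holds since $\omega^{\,\beta,\gamma}$ is integrable for $0<\beta,\gamma<1$), whereas the paper runs a Hahn--Banach duality argument through test functions in $C_{0}^{\infty}(I_{n})$ and the right-sided operators ${}_{s}I^{\varrho}_{b-}$, ${}_{s}D^{\vartheta}_{b-}$, showing $(b-x)^{-1}{}_{s}D^{\vartheta}_{b-}\eta(x)\to 0$ at $x=b$ to place $\omega^{-1}{}_{s}D^{\vartheta}_{b-}\eta$ in $L_{p'}(I,\beta,\gamma)$. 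Your route is legitimate given what the introduction already grants; the paper's is self-contained.
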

\begin{proof}
The sufficiency part of existence:
Using   formula \eqref{3},   we obtain
\begin{equation}\label{6}
 \delta'_{m+1}(\beta-1,\gamma-1)\int\limits_{a}^{b}p^{\,\beta,\gamma}_{m}(x)\left(_{s}D^{\vartheta}_{a+}S_{k}f\right)(x)\omega^{\,\beta,\gamma}(x)dx=
$$
$$
= \delta'_{m}(\beta,\gamma)
\int\limits_{a}^{b}p^{\,\beta-1,\gamma-1}_{m+1}(x)\left(_{s}I^{\vartheta}_{a+}S_{k}f\right)(x)\omega^{\,\beta-1,\gamma-1}(x)dx,\;k,m,n\in \mathbb{N}_{0}.
\end{equation}
Combining   Lemma \ref{L2}, Jacoby series  expansion, that is given by virtue of the fact $f\in L_{2}(I,\beta,\gamma),$    we can easily extend the last relation as follows
\begin{equation}\label{6.1.1}
\int\limits_{a}^{b}p^{\,\beta,\gamma}_{m}(x)\left(_{s}D^{\vartheta}_{a+}S_{k}f\right)(x)\omega^{\,\beta,\gamma}(x)dx\rightarrow
$$
$$
\rightarrow\frac{\delta'_{m}(\beta,\gamma)}{\delta'_{m+1}(\beta-1,\gamma-1)}
\int\limits_{a}^{b}p^{\,\beta-1,\gamma-1}_{m+1}(x)\left(_{s}I^{\vartheta}_{a+} f\right)(x)\omega^{\,\beta-1,\gamma-1}(x)dx,\;k\rightarrow\infty,\,m,n\in \mathbb{N}_{0}.
\end{equation}
It is clear that we can rewrite   last relation in the following form
$$
  \left|\sum\limits_{n=0}^{\infty}A^{-1}_{mn}f_{n}\right|=  C_{m}   \left|\int\limits_{a}^{b}p^{\,\beta-1,\gamma-1}_{m }(x)\left(_{s}I^{\varrho}_{a+}  f\right)(x)\omega^{\,\beta-1,\gamma-1}(x)dx\right|,
$$
where
$$
C_{m}=\frac{\delta'_{m}(\beta,\gamma)}{ \delta'_{m+1}(\beta-1,\gamma-1)}= \sqrt{(m+1)(\beta+\gamma+m)} .
$$
Thus, due to the theorem conditions we have
$$
\sum\limits_{m=1}^{\infty}\left|\sum\limits_{n=0}^{\infty}A^{-1}_{mn}f_{n}\right|^{p}m^{\xi-p }\leq C\, \mathfrak{B}^{ \beta-1,\gamma-1}_{p}( _{s}I^{\varrho}_{a+} f,\xi )<\infty.
$$
Let us calculate
$$
\xi-p =(5/2+\max\{\beta,\gamma\}) (p-2)+2-p=(1/2+\max\{\beta,\gamma\}) (p-2)+p-2.
$$
It implies that
$$
\sum\limits_{m=1}^{\infty}\left|\sum\limits_{n=0}^{\infty}A^{-1}_{mn}f_{n}\right|^{p}M^{p-2}_{m} m^{p-2 }<\infty,\;M_{m}=m^{1/2+\max\{\beta,\gamma\}}.
$$
Having applied  the Zigmund-Marczincevich theorem, we get that there exists a function $\psi\in L_{p}(I,\beta,\gamma),$ such that
$$
\sum\limits_{n=0}^{\infty}A^{-1}_{mn}f_{n}=\psi_{m},\,m\in  \mathbb{N}_{0}.
$$
As the consequences, we have
\begin{equation}\label{7}
\int\limits_{a}^{b}p^{\,\beta,\gamma}_{m}(x)\left(_{s}D^{\vartheta}_{a+}S_{k}f\right)(x)\omega^{\,\beta,\gamma}(x)dx\rightarrow \int\limits_{a}^{b}p^{\,\beta,\gamma}_{m}(x)\psi(x)\omega^{\,\beta,\gamma}(x)dx,\,m\in \mathbb{N}_{0},\,k\rightarrow\infty;
\end{equation}
\begin{equation}\label{7.0}
 \int\limits_{a}^{b}p^{\,\beta,\gamma}_{m}(x)\psi(x)\omega^{\,\beta,\gamma}(x)dx =C_{m}\int\limits_{a}^{b}p^{\,\beta-1,\gamma-1}_{m+1}(x)\left(_{s}I^{\vartheta}_{a+} f\right)(x)\omega^{\,\beta-1,\gamma-1}(x)dx.
\end{equation}
Using  simple reasonings, it is not hard to calculate the following formula
$$
  \int\limits_{a}^{b}p^{\,\beta ,\gamma }_{m }(x) S_{t}\psi(x)\omega^{\,\beta,\gamma}(x)dx     =C_{m}\int\limits_{a}^{b}p^{\,\beta-1,\gamma-1}_{m+1}(x)\omega^{\,\beta-1,\gamma-1}(x)dx \int\limits_{a}^{x}S_{t}\psi(t)dt=
$$
$$
  =  C_{m}\int\limits_{a}^{b}p^{\,\beta-1,\gamma-1}_{m+1}(x)\left(_{s}I^{\vartheta}_{a+}\, _{s}I^{\varrho}_{a+}S_{t}\psi\right)(x)\omega^{\,\beta-1,\gamma-1}(x)dx,\,
 t=0,1,...,\,.
$$
Now, using       the  Jacoby series expansion for the function $\psi$ (it is possible by virtue of the Zigmund-Marczincevich theorem) and  Lemmas \ref{L2},\ref{L3} we can extend the previous relation as follows
$$
   C_{m}\int\limits_{a}^{b}p^{\,\beta-1,\gamma-1}_{m+1}(x)\left(_{s}I^{\vartheta}_{a+}\, _{s}I^{\varrho}_{a+} \psi\right)(x)\omega^{\,\beta-1,\gamma-1}(x)dx=
  \int\limits_{a}^{b}p^{\,\beta ,\gamma }_{m }(x)  \psi(x)\omega^{\,\beta,\gamma}(x)dx.
$$
Combining this relation with \eqref{7.0}, we get
$$
\int\limits_{a}^{b}p^{\,\beta-1,\gamma-1}_{m+1}(x)\left(_{s}I^{\vartheta}_{a+}\, _{s}I^{\varrho}_{a+}\psi\right)(x)\omega^{\,\beta-1,\gamma-1}(x)dx=\int\limits_{a}^{b}p^{\,\beta-1,\gamma-1}_{m+1}(x)\left(_{s}I^{\vartheta}_{a+} f\right)(x)\omega^{\,\beta-1,\gamma-1}(x)dx,
$$
$$
\,m=0,1,...,\,.
$$
It implies that
\begin{equation}\label{8.0}
_{s}I^{\vartheta}_{a+}\, _{s}I^{\varrho}_{a+}\psi=\!_{s}I^{\vartheta}_{a+} f+\tilde{C} \;\mathrm{a.e.}\,,
\end{equation}
where
$$
\tilde{C}=\int\limits_{a}^{b}p^{\,\beta-1,\gamma-1}_{0}\left\{_{s}I^{\vartheta}_{a+} f -  \, _{s}I^{\vartheta}_{a+}\, _{s}I^{\varrho}_{a+}\psi \right\} \omega^{\,\beta-1,\gamma-1}(x)dx.
$$
Analogously to the reasonings given above, having applied Lemmas \ref{L2},\ref{L3}  it is not hard to establish the following equality
$$
\int\limits_{a}^{b}p^{\,\beta ,\gamma }_{m }(x) \,_{s} I^{\varrho}_{a+}\psi \,\omega^{\,\beta ,\gamma }(x) dx=C _{m}\int\limits_{a}^{b}p^{\,\beta-1,\gamma-1}_{m+1}(x) \,_{s}I^{\varrho}_{a+}\,_{s}I^{\vartheta}_{a+}\, _{s}I^{\varrho}_{a+}\psi \,\omega^{\,\beta-1,\gamma-1}(x) dx.
$$
Hence, using \eqref{8.0}  we obtain
$$
 \int\limits_{a}^{b}p^{\,\beta ,\gamma }_{m }(x) \,_{s} I^{\varrho}_{a+}\psi \,\omega^{\,\beta ,\gamma }(x) dx=
$$
$$
= C _{m}\int\limits_{a}^{b}p^{\,\beta-1,\gamma-1}_{m+1}(x)  \,  _{s}I^{\varrho}_{a+}\,_{s}I^{\vartheta}_{a+} f \,\omega^{\,\beta-1,\gamma-1}(x) dx+
  C _{m}\int\limits_{a}^{b}p^{\,\beta-1,\gamma-1}_{m+1}(x) \,_{s}I^{\varrho}_{a+}\tilde{C} \, \omega^{\,\beta-1,\gamma-1}(x)dx.
$$
 Using Lemmas \ref{L2},\ref{L3} in an absolutely analogous way,  we obtain
$$
C_{m}\int\limits_{a}^{b}p^{\,\beta-1,\gamma-1}_{m+1}(x)  \,  _{s}I^{\varrho}_{a+}\,_{s}I^{\vartheta}_{a+} f \,\omega^{\,\beta-1,\gamma-1}(x) dx =\int\limits_{a}^{b}p^{\,\beta ,\gamma }_{m }(x)  \,    f \,\omega^{\,\beta ,\gamma }(x) dx.
$$
 In additional, using  a trivial equality
$$
 \int\limits_{a}^{x}\varrho(x-t)dt= \int\limits_{a}^{x}\varrho(t-a)dt,
$$
we get
$$
\tilde{C}\int\limits_{a}^{b}p^{\,\beta ,\gamma }_{m }(x) \varrho(x-a) \, \omega^{\,\beta ,\gamma }(x)dx=C _{m}\int\limits_{a}^{b}p^{\,\beta-1,\gamma-1}_{m+1}(x) \,_{s}I^{\varrho}_{a+}\tilde{C} \, \omega^{\,\beta-1,\gamma-1}(x)dx.
$$
Therefore, combining the above results, we get
$$
\int\limits_{a}^{b}p^{\,\beta ,\gamma }_{m }(x) \,_{s} I^{\varrho}_{a+}\psi \,\omega^{\,\beta ,\gamma }(x) dx = \int\limits_{a}^{b}p^{\,\beta ,\gamma }_{m }(x)  \,    f \,\omega^{\,\beta ,\gamma }(x) dx+
 \tilde{C}\int\limits_{a}^{b}p^{\,\beta ,\gamma }_{m }(x) \varrho(x-a) \, \omega^{\,\beta ,\gamma }(x)dx .
$$
Thus, due to the basis property of Jacoby polynomials in $L_{2}(I,\beta,\gamma),$  we have
$$
 I^{\varrho}_{a+}\psi=f+\tilde{C}\,\varrho(x-a)\,a.e.
$$
Now let us express  the constant $\tilde{C}$ in terms of the theorem conditions.
Applying Lemmas \ref{L2},\ref{L3}  we can easily prove
$$
 \int\limits_{a}^{b}\omega^{\,\beta-1,\gamma-1}(x)  I^{1}_{a+}  S_{t}\psi  (x)dx=
 $$
 $$
 =\int\limits_{a}^{b}\omega^{\,\beta-1,\gamma-1}(x) _{s}I^{\vartheta}_{a+}  \,_{s}I^{\varrho}_{a+} S_{t}  \psi  (x)dx\rightarrow \int\limits_{a}^{b}\omega^{\,\beta-1,\gamma-1}(x) _{s}I^{\vartheta}_{a+}  \,_{s}I^{\varrho}_{a+}   \psi  (x)dx,\,t\rightarrow\infty.
$$
Therefore, we have
\begin{equation}\label{9.0}
 \int\limits_{a}^{b}p^{\,\beta-1,\gamma-1}_{0}\left\{  \, _{s}I^{\vartheta}_{a+} f- I^{1}_{a+}S_{t}\psi \right\} \omega^{\,\beta-1,\gamma-1}(x)dx\rightarrow \tilde{C},\,t\rightarrow\infty.
\end{equation}
Consider more precisely
$$
\int\limits_{a}^{x}    S_{t}\psi(t) dt=\sum\limits_{m=0}^{t}\psi_{m}\int\limits_{a}^{x}  p^{\,\beta ,\gamma }_{m }(t)    dt
  =\sum\limits_{m=0}^{t}\frac{ \psi_{m}\left( p^{\beta-1 ,\gamma-1 }_{m+1}(x) - p^{\beta-1 ,\gamma-1 }_{m+1}(a)\right)}{ \sqrt{(m+1)(\beta+\gamma+m)}  }.
$$
It follows that
$$
 \int\limits_{a}^{b}\omega^{\,\beta-1,\gamma-1}(x)  I^{1}_{a+}  S_{t}\psi  (x)dx=\int\limits_{a}^{b} \sum\limits_{m=0}^{t}\frac{ \psi_{m}\left( p^{\beta-1 ,\gamma-1 }_{m+1}(x) - p^{\beta-1 ,\gamma-1 }_{m+1}(a)\right)}{ \sqrt{(m+1)(\beta+\gamma+m)}  }\,\omega^{\,\beta-1,\gamma-1}(x)dx=
$$
$$
 =
 - \sum\limits_{m=0}^{t}\frac{ \psi_{m}    p^{\beta-1 ,\gamma-1 }_{m+1}(a) }{ \sqrt{(m+1)(\beta+\gamma+m)}  }\,\int\limits_{a}^{b}\omega^{\,\beta-1,\gamma-1}(x)dx=
  - B(\beta ,\gamma )(b-a)^{\beta+\gamma-1}\sum\limits_{m=0}^{t}\frac{ \psi_{m}    p^{\beta-1 ,\gamma-1 }_{m+1}(a) }{ \sqrt{(m+1)(\beta+\gamma+m)}  }  =
  $$
  $$
 =- B(\beta ,\gamma )(b-a)^{\beta+\gamma-1}\sum\limits_{m=0}^{t}\frac{C_{m} f^{\vartheta}_{m+1}    p^{\beta-1 ,\gamma-1 }_{m+1}(a) }{ \sqrt{(m+1)(\beta+\gamma+m)}  }=
 - B(\beta ,\gamma )(b-a)^{\beta+\gamma-1}\sum\limits_{m=0}^{t}   f^{\vartheta}_{m+1}    p^{\beta-1 ,\gamma-1 }_{m+1}(a).  \,
$$
Hence
$$
 \int\limits_{a}^{b}p^{\,\beta-1,\gamma-1}_{0}\left\{ \, _{s}I^{\vartheta}_{a+} f   -  I^{1}_{a+}S_{t}\psi \right\} \omega^{\,\beta-1,\gamma-1}(x)dx=
 B(\beta ,\gamma )(b-a)^{\beta+\gamma-1}p^{\,\beta-1,\gamma-1}_{0}\sum\limits_{m=0}^{t}   f^{\vartheta}_{m+1}    p^{\beta-1 ,\gamma-1 }_{m+1}(a) +f^{\vartheta}_{0}=
$$
$$
= \sqrt{B(\beta ,\gamma )}(b-a)^{(\beta+\gamma-1)/2} \sum\limits_{m=0}^{t}   f^{\vartheta}_{m+1}    p^{\beta-1 ,\gamma-1 }_{m+1}(a) +f^{\vartheta}_{0}=
$$
$$
= \sqrt{B(\beta ,\gamma )}(b-a)^{(\beta+\gamma-1)/2} \sum\limits_{m=0}^{t}   f^{\vartheta}_{m }    p^{\beta-1 ,\gamma-1 }_{m }(a)  \rightarrow C,\,t\rightarrow\infty.
$$

The necessity part of existence: Now assume that there exists a solution of the Abel-Sonin equation in $L_{2}(I,\beta,\gamma), \,0<\beta,\gamma<1.$ Then
using Lemmas \ref{L2},\ref{L3} we can easily establish (analogously to the above) the following equality
$$
 \int\limits_{a}^{b}p^{\,\beta,\gamma}_{m}(x)\psi(x)\omega^{\,\beta,\gamma}(x)dx =C_{m}\int\limits_{a}^{b}p^{\,\beta-1,\gamma-1}_{m+1}(x)\left(_{s}I^{\vartheta}_{a+}\,_{s}I^{\varrho}_{a+} \psi\right)(x)\omega^{\,\beta-1,\gamma-1}(x)dx=
$$
$$
=C_{m}\int\limits_{a}^{b}p^{\,\beta-1,\gamma-1}_{m+1}(x)\left(_{s}I^{\vartheta}_{a+} f\right)(x)\omega^{\,\beta-1,\gamma-1}(x)dx,\, m\in \mathbb{N}_{0}.
$$
Hence
$$
\mathfrak{B}^{ \beta-1,\gamma-1}_{2}( _{s}I^{\vartheta}_{a+} f,2 )<\infty.
$$
Since $\psi$ is a solution,  then  \eqref{8.0} is fulfilled, where $\tilde{C}=0.$ We  can also establish \eqref{9.0}, in the way that   was used  above.
Having repeated the above reasonings, we come to the relation
$$
\sum\limits_{m=0}^{\infty}   f^{\vartheta}_{m }    p^{\beta-1 ,\gamma-1 }_{m }(a)=0.
$$

The proof of uniqueness:  
Assume  that there  exists a solution $\psi$   and another solution $\phi$ in $L_{p}(I,\beta,\gamma)$ of the Sonin-Abel equation,  and let us   denote $\xi:=\psi-\phi.$
Denote
$$
I_{n}:=\left(a+ \frac{1}{n},b- \frac{1}{n}\right),
$$
then the following assumptions     are fulfilled
$$
 \bigcup\limits_{n=1}^{\infty}I_{n}=I,\;I_{n}\subset I_{n+1},\,\mu (I\!\setminus\!I_{n})\rightarrow 0,\,n\rightarrow \infty, \; L_{p}(I,\beta,\gamma)\subset L_{p}(I_{n} ).
$$
The verification is left to a reader. In terms of these denotations,  it is also clear that
$$
\bigcup\limits_{n=1}^{\infty}C_{0}^{\infty}(I_{n})=C_{0}^{\infty}(I ),\;C_{0}^{\infty}(I_{n})\subset C_{0}^{\infty}(I_{n+1}).
$$
Let us show that
\begin{equation}\label{8}
 \forall\eta \in C_{0}^{\infty}(\Omega ),\,\forall\xi\in L_{p}(I,\beta,\gamma) ,\,\exists h\in L_{p'}(I,\beta,\gamma):
 $$
 $$
 \int\limits_{a}^{b}\xi(x)\eta(x)dx=\int\limits_{a}^{b}\xi(x)\,_{s}I^{\varrho}_{b-}\omega h(x)dx.
\end{equation}
It is not hard to prove that $_{s}D^{\vartheta}_{b-}\eta(x)\in C(\bar{I}),$ the proof is left to a reader. Moreover, we have the following.
Consider
$$
    \omega^{-1}(x) \,_{s}D^{\vartheta}_{b-}\eta(x)= (x-a)^{-\beta}(b-x)^{1-\gamma }(b-x)^{-1}\!\! _{s}D^{\vartheta}_{b-}\eta(x).
$$
Let us show that $_{s}D^{\vartheta}_{b-}\eta(b)=0.$   In accordance with the reasonings applied to obtain  formula \eqref{4}, we analogously get
$$
(_{s}D^{\vartheta}_{b+}\eta)(x) = \eta(b) \vartheta(b-x)     -   \int\limits_{x}^{b}\vartheta(t-x)\eta' (t)dt=    -\int\limits_{x}^{b}\vartheta(t-x)\eta' (t)dt.
$$
It is clear that
$$
\left|\int\limits_{x}^{b}\vartheta(t-x)\eta' (t)dt\right|\leq C\int\limits_{x}^{b}|\vartheta(t-x)| dt=\int\limits_{0}^{b-x}|\vartheta(t)| dt.
$$
Since $\vartheta\in L_{2}(I',-\varepsilon),$ then
$$
\int\limits_{0}^{b-x}|\vartheta(t)| dt=0,\,x=b.
$$
Hence we obtain the desired result i.e. $_{s}D^{\vartheta}_{b-}\eta(b)=0.$ We can also get without any difficulties, by using the previous results, the following relation
  $$
  \frac{d}{dx} \,_{s}D^{\vartheta}_{b+}\eta (x) =     \int\limits_{x}^{b}\vartheta(t-x)\eta'' (t)dt,
  $$
and it is clear that
$$
  \frac{d}{dx} \,_{s}D^{\vartheta}_{b+}\eta (x)   =0,\,x=b.
  $$
Now it gives us the following
$$
(b-x)^{-1}\!\! _{s}D^{\vartheta}_{b-}\eta(x)=(b-x)^{-1}\left\{_{s}D^{\vartheta}_{b-}\eta(x)-\,_{s}D^{\vartheta}_{b-}\eta(b)  \right\}\rightarrow 0,\,x\rightarrow b.
$$
Hence the function  $
\omega^{-1} D^{\alpha}_{b-}\eta
$
belongs to $L_{p'}(I,\beta,\gamma),$ if $\beta<1/(p'-1)$ (in particularly it is fulfilled  if $1<p'\leq 2$).  It implies that we have a representation $D^{\alpha}_{b-}\eta=\omega h,$ where $h$ belongs   to $L_{p'}(I,\beta,\gamma).$   By virtue of the fact $\eta\in C_{0}^{\infty}(I),$ we can easily prove a relation $ _{s}I^{\varrho}_{b-}\,_{s} D^{\vartheta}_{b-}\eta=\eta$ (a reader only ought to repeat the proof given above corresponding to the polynomial case).      Hence
$$
\eta=\,_{s} I^{\varrho}_{b-}\omega h,\,h\in L_{p'}(I,\beta,\gamma).
$$
 Taking into account the reasonings given above,   we get formula \eqref{8} which, on account of the Fubini theorem, can be rewritten as follows
$$
\int\limits_{a}^{b}\xi(x)\eta(x)dx=\int\limits_{a}^{b}\xi(x)\,_{s}I^{\varrho}_{b-}\omega h(x)dx=\int\limits_{a}^{b}\!_{s}I^{\varrho}_{a+}\xi(x)\, h(x)\omega(x)dx=0.
$$
Hence
$$
\int\limits_{I_{n}} \xi(x)\eta(x)dx  =0,\,\forall \eta \in C_{0}^{\infty}(I_{n}).
$$
We claim that  $\xi\neq 0.$ Therefore   in accordance  with the consequence of the Hahn-Banach theorem there exists the element $\varpi\in L_{p'}(I_{n}),$ such that
$$
 \left(\varpi,\xi \right)_{L_{2}(I_{n})}  =\|\psi-\phi\|_{ L_{p}(I_{n}) }>0.
$$
On the other hand, there exists  the sequence $\{\eta_{k}\}_{1}^{\infty}\subset C_{0}^{\infty}(I_{n}),$ such  that $\eta_{k} \rightarrow \varpi$ with respect to the norm $L_{p'}(I_{n}).$  Hence
$$
0=\left(\eta_{k},\xi \right)_{L_{2}(I_{n})} \rightarrow \left(\varpi, \xi\right)_{L_{2}(I_{n})}.
$$
  Hence $\psi=\phi$ almost everywhere  on the set $I_{n},\, n=1,2,...\,$ (the explanation of this reasoning is too simple   and can be found in any book devoted to Functional Analysis). In its own turn, it implies    that $\psi=\phi$ almost everywhere on the set $I.$        The    uniqueness has been proved. Thus the proof of sufficiency has been completed.

\end{proof}
\begin{rem}
In terms of Theorem \ref{T1} it seams to be easy to formulate    necessary conditions  of solvability of the Abel-Sonin equaton in $L_{p}(I,\beta,\gamma),\,0<\beta,\gamma<1,\,1<p<2.$
\end{rem}
For this purpose we need impose the additional conditions, first of them is
$
\varrho\in L_{p'}(I').
$
Due to this condition we can repeat the reasonings of Lemma \ref{L3} and prove
\begin{equation}\label{10.0}
   \|_{s}I_{a+}^{\varrho}  f\|_{L_{2}(I,\beta,\gamma)}  \leq C\|f\|_{L_{p}(I,\beta,\gamma)}.
\end{equation}
The second condition, we need impose, is the Polard condition
$$
 4\max\left\{\frac{\beta+1}{2\beta+3},\frac{\gamma+1}{2\gamma+3}\right\}<p< 4\min\left\{\frac{\beta+1}{2\beta+1},\frac{\gamma+1}{2\gamma+1}\right\}.
$$
After that our attention ought to be concentrate upon the following equality
$$
 \int\limits_{a}^{b}p^{\,\beta,\gamma}_{m}(x)\psi(x)\omega^{\,\beta,\gamma}(x)dx =C_{m}\int\limits_{a}^{b}p^{\,\beta-1,\gamma-1}_{m+1}(x)\left(_{s}I^{\vartheta}_{a+}\,_{s}I^{\varrho}_{a+} \psi\right)(x)\omega^{\,\beta-1,\gamma-1}(x)dx.
$$
It can be proved by applying Lemma \ref{L2}, estimate \eqref{10.0} and the basis property of   Jacoby polynomials.
Hence, having taken into account that $\psi$ is a solution, we get
$$
\int\limits_{a}^{b}p^{\,\beta,\gamma}_{m}(x)\psi(x)\omega^{\,\beta,\gamma}(x)dx=C_{m}\int\limits_{a}^{b}p^{\,\beta-1,\gamma-1}_{m+1}(x)\left(_{s}I^{\vartheta}_{a+} f\right)(x)\omega^{\,\beta-1,\gamma-1}(x)dx,\, m\in \mathbb{N}_{0}.
$$
Having applied the Zigmund-Maczincevich theorem, we have
$$
\sum\limits_{m=1}^{\infty}\left| \psi_{n}\right|^{p}M^{p-2}_{m} m^{p-2 }\leq  C \|\psi\|_{L_{p}(I,\beta,\gamma)},\;M_{m}=m^{1/2+\max\{\beta,\gamma\}}.
$$
Hence, by direct calculation, we obtain
$$
\mathfrak{B}^{ \beta-1,\gamma-1}_{p}( _{s}I^{\varrho}_{a+} f,\xi )<C\sum\limits_{n=1}^{\infty}|n\psi_{n}|^{p}n^{(5/2+\max\{\beta,\gamma\})(p-2)+2}=
$$
$$
 =C \sum\limits_{n=1}^{\infty}|\psi_{n}|^{p}n^{(3/2+\max\{\beta,\gamma\})(p-2)}<\|\psi\|^{p}_{L_{p}(I,\beta,\gamma)}<\infty.
$$
The prove of the fact
$$
\sum\limits_{m=0}^{\infty}   f^{\vartheta}_{m }    p^{\beta-1 ,\gamma-1 }_{m }(a)=0
$$ is absolutely analogous to the case $p=2.$ We should only repeat the scheme of the reasonings having taken into  account the additional conditions.
These reasonings can be formed in the following theorem.
\begin{teo}\label{T2}
Assume that there exists a solution  $\psi$  in $L_{p}(I,\beta,\gamma),\,0<\beta,\gamma<1,\,1<p<2$ of the Abel-Sonin equation,
  the following additional conditions hold
$$
\varrho\in L_{p'}(I'),\,4\max\left\{\frac{\beta+1}{2\beta+3},\frac{\gamma+1}{2\gamma+3}\right\}<p< 4\min\left\{\frac{\beta+1}{2\beta+1},\frac{\gamma+1}{2\gamma+1}\right\}.
$$
Then relations \eqref{5} hold.

\end{teo}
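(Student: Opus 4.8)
The plan is to establish Theorem \ref{T2} as the $1<p<2$ analogue of the necessity part of Theorem \ref{T1}: assuming a solution $\psi\in L_{p}(I,\beta,\gamma)$ exists, I would derive both relations in \eqref{5} by transplanting the $p=2$ argument into the $L_{p}$ setting. The two additional hypotheses are precisely what make this transplant legitimate. The condition $\varrho\in L_{p'}(I')$ upgrades the mapping estimate of Lemma \ref{L3} to the mixed-norm bound \eqref{10.0}, namely $\|_{s}I_{a+}^{\varrho}f\|_{L_{2}(I,\beta,\gamma)}\leq C\|f\|_{L_{p}(I,\beta,\gamma)}$, which I would obtain by repeating the generalized Minkowski estimate of Lemma \ref{L3} with the H\"older exponent shifted from $2$ to $p'$. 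The Pollard condition guarantees that the Jacobi expansion converges in $L_{p}(I,\beta,\gamma)$, so that every interchange of summation, integration and limit that was justified by $L_{2}$--orthonormality at $p=2$ remains valid here.

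The heart of the argument is the coefficient identity
$$
\int\limits_{a}^{b}p^{\,\beta,\gamma}_{m}(x)\psi(x)\omega^{\,\beta,\gamma}(x)dx=C_{m}\int\limits_{a}^{b}p^{\,\beta-1,\gamma-1}_{m+1}(x)\left(_{s}I^{\vartheta}_{a+}\,_{s}I^{\varrho}_{a+}\psi\right)(x)\omega^{\,\beta-1,\gamma-1}(x)dx,
$$
which I would prove exactly as in Theorem \ref{T1}, but invoking Lemma \ref{L2}, the estimate \eqref{10.0} and the basis property in place of the $L_{2}$ tools. Since $\psi$ solves the Abel--Sonin equation, $_{s}I^{\varrho}_{a+}\psi=f$, so the right-hand side collapses to the Jacobi coefficient of $_{s}I^{\vartheta}_{a+}f$ tested against $p^{\,\beta-1,\gamma-1}_{m+1}$; this expresses $\psi_{m}$ as $C_{m}$ times that coefficient, with $C_{m}=\sqrt{(m+1)(\beta+\gamma+m)}\asymp m$.

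With this identity in hand, I would feed the $L_{p}$--membership of $\psi$ into the Zygmund--Marcinkiewicz theorem to obtain
$$
\sum\limits_{m=1}^{\infty}|\psi_{m}|^{p}M^{p-2}_{m}m^{p-2}\leq C\|\psi\|_{L_{p}(I,\beta,\gamma)},\qquad M_{m}=m^{1/2+\max\{\beta,\gamma\}},
$$
and then carry out the index bookkeeping already indicated in the discussion preceding the theorem: absorbing the factor $C_{m}\asymp m$ converts the left-hand weights into the exponent $\xi=(5/2+\max\{\beta,\gamma\})(p-2)+2$, yielding $\mathfrak{B}^{\beta-1,\gamma-1}_{p}(_{s}I^{\vartheta}_{a+}f,\xi)<\infty$, the first relation in \eqref{5}. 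For the second relation I would replay the $p=2$ derivation: from \eqref{8.0} with $\tilde{C}=0$ (forced because $\psi$ is an exact solution) together with \eqref{9.0}, the telescoping computation of $\int_{a}^{x}S_{t}\psi$ against the constant polynomial produces the partial sums $\sum_{m=0}^{t}f^{\vartheta}_{m}p^{\beta-1,\gamma-1}_{m}(a)$; since their limit equals $\tilde{C}=0$ up to the nonzero factor $\sqrt{B(\beta,\gamma)}(b-a)^{(\beta+\gamma-1)/2}$, the vanishing sum $\sum_{m=0}^{\infty}f^{\vartheta}_{m}p^{\beta-1,\gamma-1}_{m}(a)=0$ follows.

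The main obstacle will be rigorously justifying the limit interchanges in the regime $1<p<2$, where the Parseval/orthonormality shortcuts available at $p=2$ are lost. Everything rests on the Pollard condition delivering norm convergence of the Jacobi series of both $\psi$ and $_{s}I^{\varrho}_{a+}\psi$ in $L_{p}(I,\beta,\gamma)$, and on \eqref{10.0} controlling the intermediate operator $_{s}I^{\varrho}_{a+}$ in the mixed $L_{p}\to L_{2}$ norm; once these two facts are secured, the passages to the limit $k\to\infty$ and $t\to\infty$ go through as in Theorem \ref{T1}, and the remaining steps reduce to the routine index arithmetic sketched above.
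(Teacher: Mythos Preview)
Your proposal is correct and follows essentially the same route as the paper: upgrade Lemma~\ref{L3} to the mixed estimate \eqref{10.0} via $\varrho\in L_{p'}(I')$, use the Pollard condition to secure the Jacobi basis property in $L_{p}$, establish the coefficient identity $\psi_{m}=C_{m}f^{\vartheta}_{m+1}(\beta-1,\gamma-1)$ through Lemma~\ref{L2} and \eqref{10.0}, and then feed this into the Zygmund--Marcinkiewicz inequality to obtain $\mathfrak{B}^{\beta-1,\gamma-1}_{p}(_{s}I^{\vartheta}_{a+}f,\xi)<\infty$, with the second relation in \eqref{5} handled exactly as at $p=2$. Your identification of the limit-interchange issue as the only delicate point, resolved by Pollard plus \eqref{10.0}, is also the paper's stance.
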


\end{document}